\documentclass[12pt,reqno]{amsart}
\newtheorem{theorem}{Theorem}[section]
\newtheorem{example}[theorem]{Example}
\newtheorem{examplen}[theorem]{Numerical Example}

\newtheorem{lemma}[theorem]{Lemma}
\newtheorem{proposition}[theorem]{Proposition}
\newtheorem{remark}[theorem]{Remark}

\usepackage{listings}
\usepackage{tikz}
\lstloadlanguages{[5.2]Mathematica}
\title[Heat equation with a cubic boundary]{A solution to the heat equation with a cubic  moving boundary}
\author{Gerardo Hern\'andez-del-Valle}
\address{Direcci\'on General de Investigaci\'on Econ\'omica, Banco de M\'exico, Av. 5 de Mayo \# 18, Col. Centro Hist\'orico, M\'exico, D.F. CP 06059}
\email{gerardo.hernandez@banxico.org.mx}
\keywords{Boundary crossing, heat equation, moving boundary}
\subjclass[2010]{Primary: 30E25, 35C99, 35K05, Secondary: 60H30 }
\begin{document}
\maketitle

\begin{abstract} In this  work we find a solution to the problem of the heat equation which is killed at a cubic boundary $f$. The solution turns out to be the convolution between the fundamental solution of the heat equation and a function $\phi$ which solves a third order ODE. However, the main contribution is the procedure itself, which links in a rather straightforward way, solutions of the heat equation with moving boundaries $f$ through the convolution of the heat kernel with suitable funtions $\phi$.
\end{abstract}
\section{Introduction}
Solutions to the heat equation in moving boundaries of the form
\begin{eqnarray}\label{heat}
\nu_t(t,x)=\frac{1}{2}\nu_{xx}(t,x),\nonumber\\ \nu(t,f(t))=0,\\
\nonumber (t,x)\in\mathbb{R}^+\times\mathbb{R},\quad f\in\mathbb{C}^2
\end{eqnarray}
appear prominently in applications [see for instance: Hernandez-del-Valle (2012) for applications on the first hitting problem of Brownian motion;  Bj\"ork (2009) in the valuation of barrier options; Davis and Pistorius (2010) in the quantification of counterparty risk; Martin-L\"of (1998) for  applications in biology]. In fact, explicit solutions to the problem described in (\ref{heat}) are well known in some particular cases. For instance:
\begin{enumerate}
\item[(a)] {\it Linear boundary\/.} For $b\in\mathbb{R}$, the following function
\begin{eqnarray}\label{linear1}
\nu(t,x)=\frac{x}{\sqrt{2\pi t^3}}\exp\left\{-\frac{x^2}{2t}\right\}+b\frac{1}{\sqrt{2\pi t}}\exp\left\{-\frac{x^2}{2t}\right\}.
\end{eqnarray}
solves (\ref{heat}) in the case in which $f(t)=-bt$. [See for instance Karatzas \& Shreve (1991) for an example of this function in the first hitting  time of Brownian motion to a linear boundary.]
\item[(b)] {\it Quadratic boundary\/.} Given that $\hbox{Ai}$ is an Airy function and $\xi\in\mathbb{R}^-$ is any of its roots, then 
\begin{eqnarray}\label{quadratic}
\nu(t,x)=\exp\left\{\frac{t^3}{12}+\frac{tx}{2}\right\}\hbox{Ai}\left(x+\frac{t^2}{4}\right)
\end{eqnarray}
is a solution of problem (\ref{heat}) when $f(t)=\xi-t^2/4$. See for instance Vall\'ee \& Soares (2004) for applications of the Airy function, including the solution to the heat equation.
\item[(c)] {\it Rayleigh type equation\/.} Let 
\begin{eqnarray*}
\nu(t,x)=\frac{1}{2\pi}\int_{-\infty}^{\infty}\exp\left\{i\lambda x-\frac{1}{2}\lambda^2t-\frac{\lambda^4}{4}\right\}d\lambda
\end{eqnarray*}
be the so-called Pearcey function. Then problem (\ref{heat}) is solved in the case in which the  function $f$ solves
\begin{eqnarray*}
f''(t)=2[f'(t)]^3-\frac{1}{2}tf'(t)-\frac{1}{4}f(t).
\end{eqnarray*} 
See Hern\'andez-del-Valle (2016) for its derivation and further related literature.
\item[(d)] In general one could try to find a function $f$ that solves (\ref{heat}), whenever $\nu$  is a  linear combination of $n$ solutions $\{\nu_j\}_{j=1,\dots,n}$ to the heat equation. That is
\begin{eqnarray*}
\nu(t,x)=a_1\nu_1(t,x)+\cdots+a_n\nu_n(t,x).
\end{eqnarray*}
See for instance (\ref{linear1}). 
\end{enumerate}
We note that there is no straightforward way to solve problem (\ref{heat})  [see for instance De Lillo \& Fokas (2007) which study this problem using integral equations]. Let us distinguish the following possibilities: 
\begin{enumerate}
\item[(a.1)] Given that $\nu$ solves the heat equation then $f$ solves (\ref{heat}).
\item[(b.1)] For a given moving boundary $f$, there exists a solution $\nu$ to the heat equation, such that (\ref{heat}) holds.
\end{enumerate}
The main objective of this work is to find a solution of the heat equation which solves problem (\ref{heat}) in the case in which the moving boundary $f$ is cubic, that is, $f(t)=bt^3$, for  $b\in\mathbb{R}$ and $t\geq 0$. However, as will show throughout the document,  the methodology could in principle be used to solve this problem  to at least the case in which the boundaries $f$ are polynomial in $t$.

The technique used to achieve our goal is remarkably straightforward, and is based in  analyzing  the convolution between the {\it fundamental\/} solution of the  heat equation and some real valued and sufficiently smooth function $\phi$. In Hern\'andez-del-Valle (2016) the author uses similar arguments, but in that work the problem is  of the type (a.1). In contrast in the present paper, the problem is as the one posed in (b.1). What we will show is that making use of the technique described within, we may find a function $\phi$ which convolved with the fundamental solution to the heat equations leads to solutions of the type (b.1) in the case in which the boundary is quadratic and cubic. We suspect that the technique could be generalized to the case in which the boundary has an arbitrary integer power.  


The paper is organized as follows, in Section \ref{sec2} the technique used to link solutions $v$ of heat equation  with  moving boundaries $f$ is introduced in the case in which the linking function $\phi$ is $\mathbb{C}^2$.  Subsequently, in Section \ref{sec3}, we derive the solution of the heat equation with a cubic absorbing boundary in detail. In this section we also provide a numerical example which can be replicated. Finally, we conclude in Section \ref{sec4} with some final remarks and work in progress.

\section{Preliminary results}\label{sec2}
\begin{remark} For the remainder of this document, given any function $f$, its $n$-th partial derivative with respect to the state variable $x$ will be denoted as $f^{(n)}$.
\end{remark}
In this section we derive some algebraic properties, which  subsequently will be extended in the following section, of the convolution between the fundamental solution of the heat equation and a $\mathbb{C}^2$ function $\phi$ . To this end we will make use of the following Lemma
\begin{lemma} Let $\{v_j\}_{n\geq 0}$ be the sequence of functions  defined as
\begin{eqnarray}\label{exp}
e^{i\lambda x-\lambda^2t/2}=\sum\limits_{j=0}^\infty v_j(t/2,x)\frac{(i\lambda)^j}{j!},
\end{eqnarray}
be each a solution to the heat equation. Then
\begin{eqnarray}\label{deriv}
\frac{d^p}{d\lambda^p}\left[e^{i\lambda x-\lambda^2t/2}\right]=v_p(-t/2,ix-\lambda t)e^{i\lambda x-\lambda^2t/2}.
\end{eqnarray}

\end{lemma}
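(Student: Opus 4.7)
The plan is to deduce the identity (\ref{deriv}) via a short algebraic manipulation of the ratio $\phi(\lambda+z)/\phi(\lambda)$, where $\phi(\lambda) := e^{i\lambda x - \lambda^2 t/2}$ with $(t,x)$ held fixed, and then to match Taylor coefficients in the new variable $z$. For the preliminary assertion that each $v_j(t/2,x)$ solves the heat equation, I would first note that $\phi$ itself satisfies $\phi_t = \tfrac{1}{2}\phi_{xx}$ for every fixed $\lambda$ (since $\phi_t = -\tfrac{1}{2}\lambda^2\phi$ and $\phi_{xx} = -\lambda^2\phi$). Because the power series (\ref{exp}) converges absolutely together with all its $(t,x)$-derivatives on compact sets, termwise differentiation is legitimate and the uniqueness of the Taylor expansion in $\lambda$ then forces each coefficient $v_j(t/2,x)$ to solve the heat equation in its own right.

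For the main identity, I would carry out the direct computation
\[
\frac{\phi(\lambda+z)}{\phi(\lambda)} = \exp\bigl\{ i(\lambda+z)x - \tfrac{(\lambda+z)^2 t}{2} - i\lambda x + \tfrac{\lambda^2 t}{2} \bigr\} = \exp\bigl\{ z(ix - \lambda t) - \tfrac{z^2 t}{2} \bigr\}.
\]
The key observation is that the right-hand side is nothing but the defining expansion (\ref{exp}) after three substitutions: $t \mapsto -t$ (so the first slot of $v_p$ becomes $-t/2$), $x \mapsto ix - \lambda t$ (the second slot), and a reparametrisation $i\mu \mapsto z$ of the series variable. Inserting these into (\ref{exp}) yields
\[
\exp\bigl\{ z(ix - \lambda t) - \tfrac{z^2 t}{2} \bigr\} = \sum_{p=0}^{\infty} v_p\bigl(-t/2,\, ix - \lambda t\bigr)\,\frac{z^p}{p!}.
\]

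To finish, I would invoke Taylor's theorem in $\lambda$, giving $\phi(\lambda + z) = \sum_{p \geq 0} \tfrac{d^p\phi}{d\lambda^p}(\lambda)\,\tfrac{z^p}{p!}$; dividing through by $\phi(\lambda)$ and equating coefficients of $z^p$ with the previous display immediately produces (\ref{deriv}).

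I do not foresee a serious obstacle: the argument is essentially a rewriting of a generating-function identity, and the only delicate point is keeping the substitutions $s \leftrightarrow -t/2$ and $i\mu \leftrightarrow z$ properly tracked (the factor $i$ in the second slot is important, since $ix - \lambda t$ is complex even for real $\lambda, x, t$, which is why one ends up with $v_p$ evaluated at a complex point). As a sanity check, the cases $p=1,2$ recover $\phi' = (ix - \lambda t)\phi$ and $\phi'' = [(ix - \lambda t)^2 - t]\phi$, which is precisely what one expects from $v_p(-t/2,\cdot)$ being the natural heat polynomial of degree $p$ evaluated at $y = ix - \lambda t$.
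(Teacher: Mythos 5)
Your proposal is correct, but it takes a genuinely different route from the paper. The paper proves (\ref{deriv}) by induction on $p$: it first establishes the three-term recurrence $v_{j+1}(t,x)=xv_j(t,x)+jtv_{j-1}(t,x)$, differentiates the series (\ref{exp}) term by term to get the case $p=1$, and then pushes to higher $p$ via the product and chain rules (using $\partial v_p/\partial y = p\,v_{p-1}$ together with the recurrence), carrying the computation out explicitly only for $p=2,3$ and asserting the general step by induction. Your Taylor-shift argument --- expanding $\phi(\lambda+z)/\phi(\lambda)=\exp\{z(ix-\lambda t)-z^2t/2\}$ and matching coefficients of $z^p$ against the defining generating function with $t\mapsto -t$, $x\mapsto ix-\lambda t$, $i\mu\mapsto z$ --- proves all $p$ at once, needs neither the recurrence nor the derivative identity for the $v_p$, and in effect supplies the induction the paper only sketches; this is arguably the cleaner proof. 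The one point you should make explicit is why the identity (\ref{exp}) may be evaluated at the negative first argument $-t/2$ and the complex second argument $ix-\lambda t$: since each $v_j$ is a polynomial (read off from expanding the exponential), both sides of the substituted identity are entire in all variables, so the generating-function relation persists by analytic continuation --- you gesture at this, and it is the only delicate step. Your sanity checks for $p=1,2$ ($v_2(-t/2,y)=y^2-t$) are consistent with the paper's normalization.
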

\begin{proof}
For $j\geq 1$ the following recurrence relation (which can be verified by induction) holds for  the $v_j$  in (\ref{exp}): 
\begin{eqnarray*}
v_{j+1}(t,x)=xv_j(t,x)+jtv_{j-1}(t,x).
\end{eqnarray*}
Thus
\begin{eqnarray*}
\frac{d}{d\lambda}\left[e^{i\lambda x-\lambda^2t/2}\right]&=&\sum\limits_{j=0}^\infty v_{j+1}(t/2,x)i^{j+1}\frac{\lambda^j}{j!}\\
&=&i\sum\limits_{j=0}^\infty v_{j+1}(t/2,x)i^{j}\frac{\lambda^j}{j!}\\
&=&i\sum\limits_{j=0}^\infty(xv_j+jtv_{j-1})i^{j}\frac{\lambda^j}{j!}\\
&=&ix\sum\limits_{j=0}^\infty v_j i^{j}\frac{\lambda^j}{j!}+i\sum\limits_{j=0}^\infty jtv_{j-1}i^{j}\frac{\lambda^j}{j!}\\
&=&ix\sum\limits_{j=0}^\infty v_j i^{j}\frac{\lambda^j}{j!}+i^2\lambda t\sum\limits_{j=0}^\infty v_{j-1}i^{j-1}\frac{\lambda^{j-1}}{(j-1)!}\\
&=&(ix-\lambda t)e^{i\lambda x-\lambda^2t/2}\\
&=&v_1(t,ix-\lambda t)e^{i\lambda x-\lambda^2t/2}.
\end{eqnarray*}
So if we let
\begin{eqnarray*}
\mathrm{h}(\lambda)=e^{i\lambda x-\lambda^2t/2}\quad\hbox{and}\quad y=ix-\lambda t
\end{eqnarray*}
then
\begin{eqnarray*}
\frac{d^2\mathrm{h}}{d\lambda^2}&=&(-tv_0+yv_1)\mathrm{h}\\
&=&v_2(-t/2,y)\mathrm{h}
\end{eqnarray*}
and
\begin{eqnarray*}
\frac{d^3\mathrm{h}}{d\lambda^3}&=&\frac{\partial}{\partial\lambda}v_2\mathrm{h}+v_2\frac{\partial}{\partial \lambda}\mathrm{h}\\
&=&\frac{\partial y}{\partial\lambda}\frac{\partial v_2}{\partial y}\mathrm{h}+v_2\frac{\partial\mathrm{h}}{\partial\lambda}\\
&=&-2tv_1\mathrm{h}+yv_2\mathrm{h}\\
&=&(yv_2-2tv_1)\mathrm{h}\\
&=&v_3(-t/2,y)\mathrm{h}.
\end{eqnarray*}
Hence an induction argument shows that  (\ref{deriv}) holds.
\end{proof}

\begin{proposition}\label{prop1}
For $p, q, r\in\mathbb{N}$, let $\phi$ be a  solution to 
\begin{eqnarray}\label{phi}
x^p\phi^{(2)}(x)&=&ax^q\phi^{(1)}(x)+bx^r\phi^{(0)}(x) \qquad x\in\mathbb{R},
\end{eqnarray}
and  let $v_n$ be as in (\ref{exp}).  
Then given that $\overline{\phi}$ is the Fourier transform of a solution $\phi$ to (\ref{phi}), there exists a function $\nu$ that solves the heat equation $\nu$ defined as
\begin{eqnarray*}
\nu(t,x):=\frac{1}{2\pi}\int_{-\infty}^{\infty}\overline{\phi}(\lambda)e^{i\lambda x-\lambda^2t/2}d\lambda\qquad (t,x)\in\mathbb{R}^+\times\mathbb{R},
\end{eqnarray*}
and satisfies the following relationship as well
\begin{eqnarray}\label{th}
&&(-i)^p\int (i\lambda)^2\overline{\phi}(\lambda) v_p(-t/2,ix-\lambda t) e^{i\lambda x-\lambda^2t/2}d\lambda\\
\nonumber&&\qquad\qquad =(-i)^qa\int (i\lambda)^1\overline{\phi}(\lambda) v_q(-t/2,ix-\lambda t)  e^{i\lambda x-\lambda^2t/2}d\lambda\\
\nonumber&&\qquad\quad\qquad+(-i)^rb\int \overline{\phi}(\lambda) v_r(-t/2,ix-\lambda t) e^{i\lambda x-\lambda^2t/2}d\lambda.
\end{eqnarray}
\end{proposition}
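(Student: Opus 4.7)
The plan is to split the proof into two parts: first showing that $\nu$ is a solution of the heat equation, and then deriving the identity (\ref{th}) by taking the Fourier transform of (\ref{phi}), integrating against the heat kernel, and applying integration by parts together with the lemma.

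For the first part, note that for each fixed $\lambda$ the exponential $e^{i\lambda x-\lambda^{2}t/2}$ is a solution of the heat equation. Since the Gaussian factor $e^{-\lambda^{2}t/2}$ dominates at $|\lambda|\to\infty$ for every $t>0$, the integrals $\int |\lambda|^{k}\,\overline\phi(\lambda)\,e^{-\lambda^{2}t/2}\,d\lambda$ ($k=0,1,2$) converge (assuming $\overline\phi$ has at most polynomial growth, which is implicit in assuming $\overline\phi$ is the Fourier transform of a sufficiently regular $\phi$). Hence differentiation under the integral sign is legitimate, giving $\nu_{t}=\tfrac12\nu_{xx}$ at once.

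For the identity (\ref{th}), I first apply the Fourier transform to both sides of (\ref{phi}). Using the standard rules $\widehat{\phi^{(k)}}(\lambda)=(i\lambda)^{k}\overline\phi(\lambda)$ and $\widehat{x^{n}f}(\lambda)=i^{n}\tfrac{d^{n}}{d\lambda^{n}}\hat f(\lambda)$, the ODE (\ref{phi}) translates into
$$i^{p}\frac{d^{p}}{d\lambda^{p}}\!\bigl[(i\lambda)^{2}\overline\phi(\lambda)\bigr]=a\,i^{q}\frac{d^{q}}{d\lambda^{q}}\!\bigl[(i\lambda)\overline\phi(\lambda)\bigr]+b\,i^{r}\frac{d^{r}}{d\lambda^{r}}\!\bigl[\overline\phi(\lambda)\bigr].$$
Now multiply both sides by $e^{i\lambda x-\lambda^{2}t/2}$ and integrate over $\lambda\in\mathbb{R}$. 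For each of the three resulting terms, integrate by parts $p$, $q$, respectively $r$ times, transferring every $\lambda$-derivative from $\overline\phi$ onto the exponential. The boundary contributions at $\pm\infty$ vanish because of the Gaussian damping, and each integration by parts contributes a factor $-1$. Combined with the explicit $i^{p}$, $i^{q}$, $i^{r}$, this produces the prefactors $(-i)^{p}$, $(-i)^{q}$, $(-i)^{r}$ that appear in (\ref{th}).

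The last step is to invoke the preceding lemma: for each $k$, $\tfrac{d^{k}}{d\lambda^{k}}e^{i\lambda x-\lambda^{2}t/2}=v_{k}(-t/2,\,ix-\lambda t)\,e^{i\lambda x-\lambda^{2}t/2}$. Substituting these three evaluations into the integrated identity yields exactly (\ref{th}). The main technical obstacle is the bookkeeping of signs and indices when performing repeated integration by parts and the justification that the boundary terms truly vanish; both reduce to routine but careful checks once the $\lambda$-decay of $\overline\phi$ and its polynomial products is controlled. Beyond that, the argument is essentially algebraic and relies entirely on the lemma to identify the derivatives of the exponential.
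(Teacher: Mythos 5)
Your proposal is correct and follows essentially the same route as the paper's proof: Fourier-transform the ODE (\ref{phi}), integrate each term against $e^{i\lambda x-\lambda^2 t/2}$, integrate by parts $p$, $q$, $r$ times to move the $\lambda$-derivatives onto the exponential (picking up the $(-i)^p$, $(-i)^q$, $(-i)^r$ prefactors), and then identify those derivatives via the identity $\frac{d^{k}}{d\lambda^{k}}e^{i\lambda x-\lambda^{2}t/2}=v_{k}(-t/2,ix-\lambda t)e^{i\lambda x-\lambda^{2}t/2}$ from the preceding lemma. Your added remarks on differentiating under the integral sign and on the vanishing of boundary terms only make explicit what the paper leaves implicit.
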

\begin{proof}
Let $p, q, r\in\mathbb{N}$, and let $\phi$ be a  solution to (\ref{phi}).
Applying the Fourier transform to  both sides of identity (\ref{phi}) yields  
\begin{eqnarray*}
i^p\frac{d^p}{d\lambda^p}\left[(i\lambda)^2\overline{\phi}(\lambda)\right]&=&ai^q\frac{d^q}{d\lambda^q}\left[(i\lambda)^1\overline{\phi}(\lambda)\right]+bi^r\frac{d^r}{d\lambda^r}\left[\overline{\phi}(\lambda)\right].
\end{eqnarray*}
Next, we convolve (or average) each term of the previous expresion, with the Fourier transform of the fundamental solution to the heat equation to obtain
\begin{eqnarray}\label{con}
&&\int e^{i\lambda x-\lambda^2t/2}i^p\frac{d^p}{d\lambda^p}\left[(i\lambda)^2\overline{\phi}\right]d\lambda\\
\nonumber&&\qquad=a\int e^{i\lambda x-\lambda^2t/2}i^q\frac{d^q}{d\lambda^q}\left[(i\lambda)^1\overline{\phi}\right]d\lambda +b\int e^{i\lambda x-\lambda^2t/2} i^r\frac{d^r}{d\lambda^r}\left[\overline{\phi}\right]d\lambda.
\end{eqnarray}
In order to make the previous expression depend only on the derivatives of 
$$
e^{i\lambda x-\lambda^2t/2},
$$
we recall that for arbitrary differentiable functions $f$ and $g$, the integration by parts formula reads
\begin{eqnarray*}
\int f\frac{d^pg}{dx^p}dx&=&\int f d\left[\frac{d^{p-1}g}{dx^{p-1}}\right]\\
\nonumber&=& f\left[\frac{d^{p-1}g}{dx^{p-1}}\right]-\int \left[\frac{d^{p-1}g}{dx^{p-1}}\right]f^{(1)}dx\\
\nonumber&=&-\int \left[\frac{d^{p-1}g}{dx^{p-1}}\right]f^{(1)}dx\\
\nonumber&=&\int  \left[\frac{d^{p-2}g}{dx^{p-2}}\right]f^{(2)}dx\\
\nonumber&\vdots&\\
\nonumber&=&(-1)^p\int gf^{(p)}dx
\end{eqnarray*}
under appropriate conditions either on the functions $f$ and $g$ or on the limits of integration. 
Hence, the identity in (\ref{con}) can also be expressed as
\begin{eqnarray*}
&&(-i)^p\int (i\lambda)^2\overline{\phi}\frac{d^p}{d\lambda^p}\left[ e^{i\lambda x-\lambda^2t/2}\right]d\lambda\\
&&\qquad\qquad =(-i)^qa\int (i\lambda)^1\overline{\phi}\frac{d^q}{d\lambda^q}\left[ e^{i\lambda x-\lambda^2t/2}\right]d\lambda\\
&&\qquad\quad\qquad+(-i)^rb\int (i\lambda)^0\overline{\phi}\frac{d^r}{d\lambda^r}\left[ e^{i\lambda x-\lambda^2t/2}\right]d\lambda.
\end{eqnarray*}
Thus, equation (\ref{con}) follows from the latter equality and (\ref{deriv}).
\end{proof}
Furthermore, we observe the following:
\begin{remark}\label{rem1} Suppose there exist a pair of functions $\nu$ and $f$ that solve the moving boundary problem of the heat equation (\ref{heat}). Then, from the definition of the heat equation, the following identities should hold
\begin{eqnarray*}
f'(t)\nu^{(1)}(t,f(t))+\frac{1}{2}\nu^{(2)}(t,f(t))=0
\end{eqnarray*}
and
\begin{eqnarray*}
f''(t)\nu^{(1)}+f'(t)(f'(t)\nu^{(2)}+\nu^{(3)})+\frac{1}{4}\nu^{(4)}=0.
\end{eqnarray*}
\end{remark}
We are now ready to present the main result of this section which will make use of  Proposition \ref{prop1} and  Remark \ref{rem1}. Furthermore, the result links a family of paired functions $\nu$ and $f$, which solve (\ref{heat}), through a specific $\mathbb{C}^2$ function $\phi$.
\begin{theorem}\label{th2} For given fixed coefficients $d_0,d_1,c_0,c_1,c_2\in\mathbb{R}$, let $\phi$ be a real-valued solution of the following ODE
\begin{eqnarray}\label{ths2}
\phi^{(2)}(x)&=& \sum\limits_{j=0}^1d_jx^j\phi^{(1)}(x)+\sum\limits_{j=0}^{2}c_jx^j\phi(x), \qquad x\in\mathbb{R}.
\end{eqnarray}
In addition  at least one of the coefficients $d_1$, $c_2$ are different from zero. Then, if  there exists a function $f$ for  which (\ref{heat}) holds it should be of the following form
\begin{eqnarray*}
f(t)=\frac{-d_0d_1-2c_1-2c_2d_0t+c_1d_1t}{d_1^2+4c_2}+\sqrt{-1+d_1t+c_2t^2}\cdot \mathcal{C},
\end{eqnarray*}
where $\mathcal{C}$ is an arbitrary constant.
\end{theorem}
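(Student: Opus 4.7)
\medskip
\textbf{Proof plan.} The strategy is to push the ODE (\ref{ths2}) through Proposition \ref{prop1} term by term and then impose the boundary constraints from Remark \ref{rem1}.

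First I would apply Proposition \ref{prop1}, by linearity, to each monomial $x^p\phi^{(2)}=ax^q\phi^{(1)}+bx^r\phi$ appearing in (\ref{ths2}). The relevant cases have indices $(p,q,r)\in\{(0,0,\cdot),(0,1,\cdot),(0,\cdot,0),(0,\cdot,1),(0,\cdot,2)\}$, so only $v_0,v_1,v_2$ enter. From the recurrence in the preceding Lemma one reads off $v_0=1$, $v_1(t,y)=y$ and $v_2(t,y)=y^2+t$, hence
\begin{eqnarray*}
v_1(-t/2,ix-\lambda t)&=&ix-\lambda t,\\
v_2(-t/2,ix-\lambda t)&=&(ix-\lambda t)^2-t/2.
\end{eqnarray*}
Substituting these into the identity (\ref{th}) for each term, expanding, and recognising that $\int (i\lambda)^n \overline{\phi}(\lambda)e^{i\lambda x-\lambda^2 t/2}\,d\lambda=2\pi\,\nu^{(n)}(t,x)$ (and that a factor $-\lambda^2$ equals $(i\lambda)^2$), all terms collapse into derivatives of $\nu$ with polynomial coefficients in $t$ and $x$. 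Collecting by derivative order, the expected output is the pointwise identity
\begin{eqnarray*}
(1-d_1 t-c_2 t^2)\,\nu^{(2)}(t,x) &=& (d_0+d_1 x+c_1 t+2c_2 xt)\,\nu^{(1)}(t,x)\\
&& +\ \Big(c_0+c_1 x+c_2 x^2+\tfrac{c_2 t}{2}\Big)\nu(t,x).
\end{eqnarray*}

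Next I would evaluate this identity at $x=f(t)$. The hypothesis $\nu(t,f(t))=0$ kills the last group of terms, and the first identity in Remark \ref{rem1} gives $\nu^{(2)}(t,f(t))=-2f'(t)\nu^{(1)}(t,f(t))$. Assuming $\nu^{(1)}(t,f(t))\neq 0$ (otherwise $\nu$ and all its $x$-derivatives would vanish on the boundary and the problem trivialises), division by $\nu^{(1)}(t,f(t))$ yields the first order linear ODE
\begin{eqnarray*}
2(1-d_1 t-c_2 t^2)f'(t)+(d_1+2c_2 t)f(t)=-(d_0+c_1 t).
\end{eqnarray*}

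The remaining step is routine. Writing $u(t)=1-d_1t-c_2t^2$ one has $u'(t)=-(d_1+2c_2 t)$, so the integrating factor is $|u(t)|^{-1/2}$ and the homogeneous solution is $\mathcal{C}\sqrt{-u(t)}=\mathcal{C}\sqrt{-1+d_1 t+c_2 t^2}$. For the particular solution I would use the ansatz $f_p(t)=\alpha+\beta t$, since the right-hand side is affine and the coefficient of $t^2$ in the resulting polynomial identity cancels; matching constant and linear coefficients gives the $2\times 2$ linear system $2\beta+d_1\alpha=-d_0$, $\ -d_1\beta+2c_2\alpha=-c_1$, whose determinant is $\tfrac{1}{2}(d_1^2+4c_2)$, and solving produces exactly the rational expression in the statement.

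\medskip
\textbf{Main obstacle.} The bookkeeping in the first step is where something could go wrong: five Proposition \ref{prop1} instances must be combined correctly, with the right powers of $-i$, and the complex expressions $(ix-\lambda t)^k$ expanded so that every $\lambda$-monomial is re-interpreted as an $x$-derivative of $\nu$. Everything else is linear algebra and a standard first order ODE. One should also keep in mind that the hypothesis "at least one of $d_1,c_2$ is nonzero" is slightly weaker than $d_1^2+4c_2\neq 0$; in degenerate cases where the determinant vanishes the affine ansatz must be replaced by a higher-degree polynomial, but this falls outside the formula asserted by the theorem.
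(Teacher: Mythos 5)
Your proposal follows the paper's proof exactly: apply Proposition \ref{prop1} term by term to obtain the pointwise identity $(1-d_1t-c_2t^2)\nu^{(2)}=(d_0+d_1x+c_1t+2c_2tx)\nu^{(1)}+(c_0+c_1x+c_2x^2+c_2t/2)\nu$, evaluate at $x=f(t)$ using $\nu(t,f(t))=0$ together with the first identity of Remark \ref{rem1} to reach the first-order linear ODE $2(1-d_1t-c_2t^2)f'(t)+(d_1+2c_2t)f(t)=-(d_0+c_1t)$, and integrate. Your explicit integrating-factor and affine-ansatz computation merely fills in what the paper dismisses as ``standard techniques,'' and your closing remark that the formula really requires $d_1^2+4c_2\neq 0$ rather than just $(d_1,c_2)\neq(0,0)$ is a correct refinement of a point the paper glosses over.
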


\begin{proof}
If the function $\phi$ is a solution of  (\ref{ths2}), it follows from Proposition \ref{prop1} that its convolution with the fundamental solution of the heat equation yields
\begin{eqnarray}\label{id1}
&&\nonumber(1-d_1t-c_2t^2)\nu^{(2)}(t,x)\\
&&\qquad=(d_0+d_1x+c_1t+c_2 2tx)\nu^{(1)}(t,x)\\
&&\nonumber\qquad\quad+(c_0+c_1x+c_2x^2+c_2t/2)\nu^{(0)}(t,x).
\end{eqnarray}
Now, suppose that there exists a function $f$ such that $\nu(t,f(t))=0$. Then, from  (\ref{id1}),
\begin{eqnarray*}
&&(1-d_1t-c_2t^2)\nu^{(2)}(t,f(t))\\
&&\qquad\qquad=(d_0+c_1t+[d_1+2c_2t]f(t))\nu^{(1)}(t,f(t))
\end{eqnarray*}
and, from Remark \ref{rem1},
\begin{eqnarray*}
\nu^{(2)}(t,f(t))=-2f'(t)\nu^{(1)}(t,f(t)).
\end{eqnarray*}
Thus, equating the two previous expressions we have that
\begin{eqnarray*}
-2f'(t)(1-d_1t-c_2t^2)=(d_0+c_1t+[d_1+2c_2t]f(t)).
\end{eqnarray*}
This implies that $f$, which solves the later ODE, has the following general solution (by standard techniques) as long as at least one of the coefficients $d_1$, $c_2$ are different from zero
\begin{eqnarray*}
f(t)=\frac{-d_0d_1-2c_1-2c_2d_0t+c_1d_1t}{d_1^2+4c_2}+\sqrt{-1+d_1t+c_2t^2}\cdot \mathcal{C}.
\end{eqnarray*}
\end{proof}
\section{Derivation of the cubic boundary}\label{sec3}
In this section we present the main result of this paper. Namely, given that $f$ is a cubic moving boundary, we find a $\mathbb{C}^3$ function $\phi$ which solves the moving boundary problem of the heat equation (\ref{heat}). 
\begin{theorem}\label{th3}  Suppose that the moving boundary $f$ in (\ref{heat}) is $f(t)=-b_2/8\cdot t^3$. Furthermore let 
 $b_2\in\mathbb{R}\backslash \{0\}$, and $\phi$ be a real-valued function that satisfies that
\begin{eqnarray}\label{ph1}
\phi'''(x)=b_2x^2\phi(x).
\end{eqnarray}
Then there exists a real valued solution of (\ref{ph1}), which  convoluted with the heat kernel yields a function $\nu$ that solves problem (\ref{heat}).

\end{theorem}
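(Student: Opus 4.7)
The plan is to mirror the strategy of Theorem~\ref{th2}, replacing the second-order linking ODE by the third-order ODE~(\ref{ph1}). First, I would extend the averaging argument of Proposition~\ref{prop1} to accommodate a third derivative on the left. Applying the Fourier transform to~(\ref{ph1}) gives $(i\lambda)^{3}\overline{\phi}(\lambda)=-b_{2}\overline{\phi}''(\lambda)$; multiplying both sides by $e^{i\lambda x-\lambda^{2}t/2}/(2\pi)$, integrating in $\lambda$, and shifting both $\lambda$-derivatives onto the exponential through integration by parts (provided $\overline{\phi}$ decays fast enough for the boundary terms to vanish), I would apply identity~(\ref{deriv}) at $p=2$ to obtain a cubic analogue of~(\ref{id1}) of the form
\begin{eqnarray*}
\nu^{(3)}(t,x) \;=\; b_{2}\bigl[\,t^{2}\nu^{(2)}(t,x) \;+\; 2xt\,\nu^{(1)}(t,x) \;+\; \alpha(x,t)\,\nu(t,x)\,\bigr],
\end{eqnarray*}
where $\alpha(x,t)$ is an explicit polynomial of the form $x^{2}+\text{const}\cdot t$ coming from the $v_{2}$ coefficient of the lemma.

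Next, I would impose $\nu(t,f(t))=0$ and invoke both identities of Remark~\ref{rem1}. The boundary condition annihilates the $\nu$ term, and the first identity, $\nu^{(2)}(t,f)=-2f'(t)\nu^{(1)}(t,f)$, collapses the right-hand side to a scalar multiple of $\nu^{(1)}(t,f)$, giving
\begin{eqnarray*}
\nu^{(3)}(t,f(t)) \;=\; 2b_{2}t\bigl[\,f(t)-t f'(t)\,\bigr]\,\nu^{(1)}(t,f(t)).
\end{eqnarray*}
Differentiating the displayed cubic identity once more in $x$ and specialising to $x=f(t)$ yields an analogous expression for $\nu^{(4)}(t,f)$ which, after iteratively eliminating $\nu^{(2)}$ and $\nu^{(3)}$ via the two reductions above, is again a scalar multiple of $\nu^{(1)}(t,f)$. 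Inserting this into the second identity of Remark~\ref{rem1},
\begin{eqnarray*}
\tfrac{1}{4}\nu^{(4)}+f'\nu^{(3)}+(f')^{2}\nu^{(2)}+f''\nu^{(1)} \;=\; 0,
\end{eqnarray*}
and cancelling the factor $\nu^{(1)}(t,f)$ should produce a single polynomial relation in $t$, $f$, $f'$, $f''$, $b_{2}$.

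The last step is to check that $f(t)=-b_{2}t^{3}/8$ solves this relation identically in $t$. With $f'(t)=-3b_{2}t^{2}/8$ and $f''(t)=-3b_{2}t/4$ the convenient combination $f-t f'=b_{2}t^{3}/4$ controls almost every coefficient, and the prefactor $-1/8$ is precisely what is needed to make the contributions of order $b_{2}t$ and of order $b_{2}^{3}t^{6}$ cancel separately. To promote this formal computation into a genuine solution $\nu$, I would still have to exhibit a real-valued solution $\phi$ of~(\ref{ph1}) that is smooth and decays rapidly enough for $\overline{\phi}$ to exist and for every boundary term appearing in the integrations by parts to vanish. Since~(\ref{ph1}) is a linear third-order ODE, standard theory supplies a three-dimensional real solution space, and a WKB-type analysis at $x\to\pm\infty$ (solutions behave in modulus like $\exp(\tfrac{3}{5}b_{2}^{1/3}x^{5/3})$, with two of the three roots of unity giving decay) shows that suitable decaying real linear combinations exist. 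I expect this last analytic point, rather than the algebraic verification, to be the main obstacle: identifying an explicit decaying real solution and quantifying its decay sharply enough to justify each integration by parts used along the way.
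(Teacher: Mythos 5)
Your proposal follows essentially the same route as the paper: Fourier-transforming the linking ODE, averaging against the heat kernel to obtain the identity $\nu^{(3)}=b_2\bigl[t^2\nu^{(2)}+2tx\,\nu^{(1)}+(x^2+t)\nu\bigr]$, collapsing it via the two identities of Remark~\ref{rem1} into a single ODE for $f$, and checking that $f(t)=-b_2t^3/8$ annihilates both the $t$ and the $t^6$ coefficients --- the only structural difference being that the paper starts from a general third-order linking ODE with polynomial coefficients and then sets all of them but $b_2$ to zero. The analytic points you flag at the end (exhibiting a real, sufficiently decaying solution $\phi$ and justifying the integrations by parts) are likewise left unaddressed by the paper, which only displays the hypergeometric solution and a numerical check.
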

Before we present  the proof of the theorem, we provide an example.

\begin{example}\label{ex3} For $b_2=-1$,  the function  
\begin{eqnarray*}
\phi(x):=\frac{x}{5^{3/5}}\cdot\!\!\!\! \phantom{1}_{0}F_{2}\left[\Big{\{}\Big{\}},\Big{\{}\frac{4}{5},\frac{6}{5}\Big{\}},-\frac{x^5}{125}\right],
\end{eqnarray*}
defined in terms of the generalized hypergeometric function $\cdot\!\!\!\! \phantom{1}_{0}F_{2}$, solves (\ref{ph1}). Furthermore, let
\begin{eqnarray*}
h(t,x):=\frac{1}{\sqrt{2\pi t}}\exp\left\{-\frac{x^2}{2t}\right\}\qquad\hbox{for } x\in\mathbb{R},\enskip t\geq 0.
\end{eqnarray*}
Then, the convolution of $h$ and $\nu$
\begin{eqnarray*}
\nu(t,y)=\int_{-\infty}^\infty h(t,x)\phi(y-x)dx
\end{eqnarray*}
is a solution to the heat equation in problem (\ref{heat}) when $f(t)=t^3/8$. That is
\begin{eqnarray*}
\nu(t,t^3/8)=0\qquad \forall t\geq 0.
\end{eqnarray*}
\end{example}
\begin{examplen} We carry out some numerical calculations with the functions described in Example \ref{ex3} using Mathematica. The code and results are the following:\\ 
{\tiny
\begin{lstlisting}
DSolve[{f'''[x] == -x^2*f[x], f[0] == 0}, f[x], x]
\end{lstlisting}
\begin{lstlisting}
f[x_, C_, D_] := 
 1/25 (5 5^(2/5)x*C*HypergeometricPFQ[{}, {4/5, 6/5}, -(x^5/125)] + 
    5^(4/5)x^2*D*HypergeometricPFQ[{}, {6/5, 7/5}, -(x^5/125)])
\end{lstlisting}
\begin{lstlisting}
gau[t_, x_, y_] := 1/Sqrt[2*Pi*t]*Exp[-(x - y)^2/(2*t)]
\end{lstlisting}
\begin{lstlisting}
Table[NIntegrate[
  f[-x + (i/20)^3/8, 1, 0]*gau[i/20, x, 0], {x, -Infinity, 
   Infinity}], {i, 10}]
Out={-5.96311*10^-19, -6.83047*10^-18, -5.64802*10^-18, 
 9.97466*10^-18, -3.25261*10^-18, -2.71051*10^-18, -7.80626*10^-18, 
-3.46945*10^-18, -6.50521*10^-18, -9.86624*10^-18}
\end{lstlisting}
\begin{lstlisting}
Table[NIntegrate[
  f[-x + (i/20)^3/8 + 2, 1, 0]*gau[i/20, x, 0], {x, -Infinity, 
   Infinity}], {i, 10}]
Out={0.530853, 0.49626, 0.46151, 0.426968, 0.392976, 0.359844, 0.327851, 
0.297236, 0.268196, 0.240891}
\end{lstlisting}
}
\end{examplen}
The proof of Theorem \ref{th3} is in the spirit of the proof of Theorem \ref{th2}. The only difference is that that the function $\phi$, which links $\nu$ and the boundary $f$ in problem (\ref{heat}), is now $\mathbb{C}^3$ instead of $\mathbb{C}^2$.

\begin{proof}[Proof of Theorem \ref{th3}] For arbitrary constants $d_0,d_1,c_0,c_1,c_2,b_0,b_1,b_2,b_3\in\mathbb{R}$ let $\phi$ be a real-valued solution of the following equation
\begin{eqnarray*}
\phi^{(3)}=\sum\limits_{j=0}^1d_jx^j\phi^{(2)}+\sum\limits_{j=0}^2c_jx^j\phi^{(1)}+\sum\limits_{j=0}^3b_jx^j\phi^{(0)}.
\end{eqnarray*}
A direct application of Proposition \ref{prop1} to the latter identity yields
\begin{eqnarray*}
\nu^{(3)}&=&d_0\nu^{(2)}+d_1x\nu^{(2)}+d_1t\nu^{(3)}+c_0\nu^{(1)}+c_1x\nu^{(1)}+c_1t\nu^{(2)}\\
&&+c_2x^2\nu^{(1)}+c_22tx\nu^{(2)}+c_2t^2\nu^{(3)}+c_2t\nu^{(1)}\\
&&+b_0\nu^{(0)}+b_1x\nu^{(0)}+b_1t\nu^{(1)}+b_2x^2\nu^{(0)}+b_22tx\nu^{(1)}\\
&&+b_2t^2\nu^{(2)}+b_2t\nu^{(0)}+b_33t^2\nu^{(1)}+b_3t^3\nu^{(3)}\\
&&+b_33tx\nu^{(0)}+b_33t^2x\nu^{(2)}+b_33tx^2\nu^{(1)}+b_3x^3\nu^{(0)}.
\end{eqnarray*}
After factorizing coefficients, taking derivatives with respect to $x$, and evaluating at $x=f(t)$, yields
\begin{eqnarray*}
&&(1-d_1t-c_2t^2-b_3t^3)\nu^{(3)}\\
&&\qquad=(d_0+d_1x+c_1t+c_22tx+b_2t^2+b_33t^2x)\nu^{(2)}\\
&&\qquad\quad+\big{(}c_0+c_1x+c_2x^2+c_2t+b_1t+b_22tx+b_33t^2+b_33tx^2\big{)}\nu^{(1)}
\end{eqnarray*}
and
\begin{eqnarray*}
&&(1-d_1t-c_2t^2-b_3t^3)\nu^{(4)}\\
&&\qquad =(d_0+d_1x+c_1t+c_22tx+b_2t^2+b_33t^2x)\nu^{(3)}\\
&&\qquad\quad+\big{(}d_1+c_0+c_1x+c_2x^2+c_22t+c_2t+b_1t+b_22tx\\
&&\qquad\quad\enskip+b_33t+b_33t^2+b_33tx^2\big{)}\nu^{(2)}\\
&&\qquad\quad+\big{(}c_1+c_22x+b_0+b_1x+b_2x^2+b_22t+b_2t\\
&&\qquad\quad\enskip+b_33tx+b_36tx+b_3x^3\big{)}\nu^{(1)}.
\end{eqnarray*}
Next, if
\begin{eqnarray*}
d_1=c_2=b_3=0
\end{eqnarray*}
it follows that
\begin{eqnarray}\label{cu1}
\nu^{(3)}&=&(d_0+c_1t+b_2t^2)\nu^{(2)}+(c_0+c_1x+b_1t+b_22tx)\nu^{(1)}\\
\nu^{(4)}&=&(d_0+c_1t+b_2t^2)\nu^{(3)}\label{cu2}\\
&&+(c_0+c_1x+b_1t+b_22tx)\nu^{(2)}\nonumber\\
&&+(c_1+b_0+b_1x+b_2x^2+b_22t+b_2t)\nu^{(1)}.\nonumber
\end{eqnarray}
From (\ref{cu1}) and the first identity in  Remark \ref{rem1} we obtain
\begin{eqnarray*}
\nu^{(3)}&=&\Big{[}-2f\rq{}(t)\{d_0+c_1t+b_2t^2\}\\
&&+(c_0+c_1f(t)+b_1t+b_22tf(t))\Big{]}\nu^{(1)}.
\end{eqnarray*}
Next, from the second equation of Remark \ref{rem1} and the previous identity
\begin{eqnarray}\label{nu1}
\nu^{(4)}&=&[-4f\rq{}\rq{}(t)+8(f\rq{}(t))^3]\nu^{(1)}\\
\nonumber&&-4f\rq{}(t)\big{[}-2f\rq{}(t)\{d_0+c_1t+b_2t^2\}\\
\nonumber&&\enskip+(c_0+c_1f(t)+b_1t+b_22tf(t))\big{]}\nu^{(1)}.
\end{eqnarray}
In turn, from (\ref{cu2})
\begin{eqnarray}\label{nu2}
\nu^{(4)}&=&(d_0+c_1t+b_2t^2)\big{[}-2f\rq{}(t)\{d_0+c_1t+b_2t^2\}\\
\nonumber&&\enskip+(c_0+c_1f(t)+b_1t+b_22tf(t))\big{]}\nu^{(1)}\\
\nonumber&&+(c_0+c_1f(t)+b_1t+b_22tf(t))(-2f\rq{}(t))\nu^{(1)}\\
\nonumber&&+(c_1+b_0+b_1f(t)+b_2f^2(t)+b_23t)\nu^{(1)}.
\end{eqnarray}
Thus equating the right-hand sides of equations (\ref{nu1}) and (\ref{nu2}) it follows that
\begin{eqnarray*}
&&-4f\rq{}\rq{}(t)+8(f\rq{}(t))^3+8(f\rq{}(t))^2\{d_0+c_1t+b_2t^2\}\\
&&\enskip-4f\rq{}(t)\{c_0+c_1f(t)+b_1t+b_22tf(t)\}\\
&&\quad=-2f\rq{}(t)\{d_0+c_1t+b_2t^2\}^2\\
&&\qquad\quad+(c_0+c_1f(t)+b_1t+b_22tf(t))(d_0+c_1t+b_2t^2)\\
&&\qquad\quad-2f\rq{}(t)(c_0+c_1f(t)+b_1t+b_22tf(t))\\
&&\qquad\quad+(c_1+b_0+b_1f(t)+b_2f^2(t)+b_23t),
\end{eqnarray*}
which is further simplified by setting $d_0=c_1=b_1=c_0=0$
\begin{eqnarray*}
&&-4f\rq{}\rq{}(t)+8(f\rq{}(t))^3+8(f\rq{}(t))^2b_2t^2-4f\rq{}(t)2tb_2f(t)\\
&&\qquad=-2f\rq{}(t)b^2_2t^4+2tb_2f(t)b_2t^2\\
&&\quad\quad -2f\rq{}(t)2b_2tf(t)+b_2f^2(t)+3b_2t,
\end{eqnarray*}
or equivalently
\begin{eqnarray*}
&&-4f\rq{}\rq{}(t)+2f\rq{}(t)[4(f\rq{}(t))^2+4b_2t^2f\rq{}(t)-2b_2tf(t)+b_2^2t^4]\\
&&\qquad-b_2f(t)[2b_2t^3+f(t)]-3b_2t=0.
\end{eqnarray*}
Now, in order the verify the statement of the theorem, let $f(t)=\delta t^3$, so $f\rq{}(t)=3\delta t^2$, $f\rq{}\rq{}(t)=6\delta t$. Then substitute the values of $f$, $f'$ and $f''$ in the latter identity, in order to obtain
\begin{eqnarray*}
&&-24\delta t-3b_2t+2(3\delta t^2)(4\cdot 9\delta^2t^4+4b_2t^2\cdot 3\delta t^2-2b_2t\delta t^3+b_2^2t^4)\\
&&\qquad -b_2\delta t^3(2b_2t^3+\delta t^3)=0,
\end{eqnarray*}
or equivalently
\begin{eqnarray*}
&&-3t(8\delta+b_2)+6\delta t^2(36\delta^2t^4+12b_2\delta t^4-2b_2\delta t^4+b_2^2t^4)\\
&&\qquad -b_2\delta t^3(2b_2t^3+\delta t^3)=0.
\end{eqnarray*}
Factorizing in terms of $t$ and $t^6$ we have
\begin{eqnarray*}
&&-3t(8\delta+b_2)\\
&&\qquad \delta t^6(216\delta^2+72 b_2\delta-12b_2\delta+6b_2^2-2b_2^2-b_2\delta)=0.
\end{eqnarray*}
Finally for the last expression to hold for all $t\geq 0$ it should hold that
\begin{eqnarray*}
\delta=-b_2/8.
\end{eqnarray*}
But this also yields
\begin{eqnarray*}
216 b_2^2/64-59b_2^2/8+4b^2/2=0
\end{eqnarray*}
and thus the proof is complete.
\end{proof}

\section{Concluding Remarks}\label{sec4}
In this work we find a solution to the problem of the heat equation equation with a cubic killing moving boundary. The procedure described within also suggests there might be a relationship between the smoothness of the convoluting function $\phi$ and the power of the moving boundary $f$. In particular, we show in the present work that for $p=2,3$ this relationship holds. A more general descripction of the procedure presented in the present work is in progress.

\end{document}